\newcommand{\C}{\mathbb{C}}
\newcommand{\Z}{\mathbb{Z}}
\newcommand{\QQ}{\mathbb{Q}}
\newcommand{\PP}{\mathbb{P}}
\newcommand{\A}{\mathbb{A}}
\newcommand{\MM}{\mathcal M}
\newcommand{\gr}{\hbox{Gr}}
\newcommand{\ide}{\hbox{id}}
\newtheorem{theorem}{Theorem}[section]
\newtheorem{lemma}[theorem]{Lemma}
\newtheorem{corollary}[theorem]{Corollary}
\newtheorem{nonumbering}{Theorem}
\newtheorem{convention}{Conventions}
\theoremstyle{definition}
\newtheorem{remark}[theorem]{Remark}
\newtheorem{nonumberingt}{Acknowledgements}
\begin{document}
\author[Robert Laterveer]
{Robert Laterveer}

\address{Institut de Recherche Math\'ematique Avanc\'ee,
CNRS -- Universit\'e 
de Strasbourg,\
7 Rue Ren\'e Des\-car\-tes, 67084 Strasbourg CEDEX,
FRANCE.}
\email{robert.laterveer@math.unistra.fr}

\title[On the motive of IMOU CY threefolds]{On the motive of Ito--Miura--Okawa--Ueda Calabi--Yau threefolds}

\begin{abstract} Ito-Miura-Okawa-Ueda have constructed a pair of Calabi--Yau threefolds $X$ and $Y$ that are L-equivalent and derived equivalent, but not stably birational.
We complete the picture by showing that $X$ and $Y$ have isomorphic Chow motives.
\end{abstract}

\keywords{Algebraic cycles, Chow groups, motives, Calabi--Yau varieties, derived equivalence}

\subjclass{Primary 14C15, 14C25, 14C30.}

\maketitle

\section{Introduction}

Let $\hbox{Var}(k)$ denote the category of algebraic varieties over a field $k$.
The Grothendieck ring $K_0(\hbox{Var}(k))$ encodes fundamental properties of the birational geometry of varieties. The intricacy of the ring $K_0(\hbox{Var}(k))$
is highlighted by the result of Borisov \cite{Bor}, showing that the class of the affine line $[\A^1]$ is a zero--divisor in $K_0(\hbox{Var}(k))$. Inspired by \cite{Bor}, Ito--Miura--Okawa--Ueda \cite{IMOU} exhibit a pair of Calabi--Yau threefolds $X, Y$ that are {\em not\/} stably birational (and so $[X]\not=[Y]$ in the Grothendieck ring), but
  \[          ([X] -[Y]) [\A^1]=0\ \ \ \hbox{in}\ K_0(\hbox{Var}(k)) \ \]
  (i.e., $X$ and $Y$ are ``L-equivalent'', a notion studied in \cite{KS}).
  
  As shown by Kuznetsov \cite{Kuz}, the threefolds $X, Y$ of \cite{IMOU} are derived equivalent. 
  According to a conjecture of Orlov \cite[Conjecture 1]{Or}, derived equivalent smooth projective varieties should have isomorphic Chow motives.
  The aim of this tiny note is to check that such is indeed the case for the threefolds $X, Y$:
  
  \begin{nonumbering}[=theorem \ref{main}] Let $X, Y$ be the two Calabi--Yau threefolds of \cite{IMOU}. Then
   \[ h(X)\cong h(Y)\ \ \ \hbox{in}\ \MM_{\rm rat}\ .\]
   \end{nonumbering}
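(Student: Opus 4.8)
My plan is to avoid Kuznetsov's derived equivalence entirely and instead extract the motivic statement from the explicit birational geometry that already produces the L-equivalence in \cite{IMOU}. Recall that $X$ and $Y$ are constructed as zero loci of general sections of homogeneous rank-$2$ vector bundles on the two five-dimensional rational homogeneous spaces $B_1$ and $B_2$ of the group $G_2$, and that the identity $([X]-[Y])[\A^1]=0$ comes from a common resolution: there is a smooth projective variety $D$ --- a general divisor on a $\PP^1$-bundle roof over $B_1$ and $B_2$ (namely the flag variety $G_2/B$) --- fitting into birational morphisms that exhibit $D$ simultaneously as the blow-up of $B_1$ along $X$ and as the blow-up of $B_2$ along $Y$,
\[ \mathrm{Bl}_X(B_1)\ \cong\ D\ \cong\ \mathrm{Bl}_Y(B_2)\ ,\]
where $X\subset B_1$ and $Y\subset B_2$ each have codimension $2$.

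Granting this, the argument is pure bookkeeping. The blow-up formula for Chow motives gives, in $\MM_{\rm rat}$,
\[ h(B_1)\oplus h(X)(-1)\ \cong\ h(D)\ \cong\ h(B_2)\oplus h(Y)(-1)\ ,\]
the lone Tate twist reflecting that the blown-up centres have codimension $2$. Since $B_1$ and $B_2$ are rational homogeneous, hence cellular, their motives are finite direct sums of Lefschetz motives, the multiplicity of $\LL^{\otimes i}$ being the $2i$-th Betti number. A count of Schubert cells (equivalently, of minimal coset representatives in the Weyl group of $G_2$, which has order $12$) shows that both $B_1$ and $B_2$ have Poincar\'e polynomial $1+q+q^2+q^3+q^4+q^5$, so that $h(B_1)\cong h(B_2)\cong \bigoplus_{i=0}^{5}\LL^{\otimes i}$. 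Cancelling this common summand from the displayed isomorphism --- permissible because sums of Lefschetz motives can be cancelled in $\MM_{\rm rat}$ --- yields $h(X)(-1)\cong h(Y)(-1)$, and undoing the twist gives $h(X)\cong h(Y)$.

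The part I expect to be the real obstacle is the geometric input of the first paragraph: one must identify the two $\PP^1$-bundle structures on the roof $G_2/B$, verify that they carry a common relative $\OO(1)$, and check that a general divisor in the associated linear system is smooth and realises the two blow-ups --- in particular that the zero loci $X$ and $Y$ are smooth of dimension $3$ with trivial canonical class, i.e. genuine Calabi--Yau threefolds. All of this is in substance the content of \cite{IMOU}, so in practice I would simply cite it and devote the note to the motivic bookkeeping above. (It is worth noting why the more obvious route seems harder: deducing $h(X)\cong h(Y)$ from the derived equivalence \cite{Kuz} by using the Fourier--Mukai kernel as a correspondence would require that $h^3(X)$ be finite-dimensional in the sense of Kimura--O'Sullivan, which is not available for these threefolds, whereas the blow-up argument needs no such hypothesis.)
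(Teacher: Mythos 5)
Your proposal is correct in substance, but it takes a genuinely different route from the paper, which uses the same geometric input (the divisor $M\subset G_2/B$ that is simultaneously the blow-up of the quadric $Q$ along $X$ and of the $G_2$-Grassmannian $G$ along $Y$ --- your $D$; this is indeed in \cite{IMOU} and \cite{Kuz}, so citing them for your first paragraph is legitimate) in a different way: the paper first splits off $h^3(X)$ and $h^3(Y)$ using Picard number one, then shows that the explicit correspondence $\Gamma=\Gamma_q\circ{}^t\Gamma_j\circ\Gamma_i\circ{}^t\Gamma_p$ induces isomorphisms on $A^\ast_{hom}$ over every field extension (via Vial's blow-up results and the triviality of the Chow groups of $Q$, $F$, $G$), and concludes by an identity-principle type lemma. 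Your route --- motivic blow-up formula, cellularity of the two $G_2$-Grassmannians, cancellation --- is shorter and compares the full motives at once, whereas the paper's route has the merit of exhibiting an explicit correspondence realizing the isomorphism. The one step you must not wave through is the cancellation of $T=\bigoplus_{i=0}^{5}\LL^{\otimes i}$ from $T\oplus h(X)(-1)\cong T\oplus h(Y)(-1)$: this is precisely the kind of statement whose failure in $K_0(\hbox{Var}(k))$ is the whole point of \cite{IMOU}, and Krull--Schmidt is not available in $\MM_{\rm rat}$, so ``sums of Lefschetz motives can be cancelled'' needs justification. It is true: $\operatorname{End}(\LL^{\otimes i})=\QQ$ is local, and a direct summand with local endomorphism ring cancels in any pseudo-abelian additive category (e.g.\ apply $\operatorname{Hom}(T\oplus h(X)(-1)\oplus h(Y)(-1),-)$ to reduce to finitely generated projective modules over the endomorphism ring and use the classical cancellation theorem for summands with local endomorphism rings), so one cancels the six Lefschetz summands one at a time; since the Tate twist is an autoequivalence, $h(X)\cong h(Y)$ follows. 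Make that step explicit (or replace it by the paper's correspondence argument); as written, the parenthetical assertion is the only real gap, and the rest of your bookkeeping (codimension-two centres, Betti numbers $1,1,1,1,1,1$ for both $G_2$-Grassmannians) is accurate.
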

   
 An immediate corollary is that if $k$ is a finite field, then $X$ and $Y$ share the same zeta function (corollary \ref{cor}).

\vskip0.6cm

\begin{convention} In this note, the word {\sl variety\/} will refer to a reduced irreducible scheme of finite type over a field $k$. For a smooth variety $X$, we will denote by $A^j(X)$ the Chow group of codimension $j$ cycles on $X$ 
with $\QQ$-coefficients.

The notation 
$A^j_{hom}(X)$ will be used to indicate the subgroups of 
homologically trivial cycles.
For a morphism between smooth varieties $f\colon X\to Y$, we will write $\Gamma_f\in A^\ast(X\times Y)$ for the graph of $f$, and ${}^t \Gamma_f\in A^\ast(Y\times X)$ for the transpose correspondence.

The contravariant category of Chow motives (i.e., pure motives with respect to rational equivalence as in \cite{Sc}, \cite{MNP}) will be denoted $\MM_{\rm rat}$. 
\end{convention}

 \section{The Calabi--Yau threefolds}
 
 \begin{theorem}[Ito--Miura--Okawa--Ueda \cite{IMOU}]\label{imou} Let $k$ be an algebraically closed field of characteristic $0$. There exist two Calabi--Yau threefolds $X, Y$ over $k$ such that
   \[ [X]\not= [Y]\ \ \ \hbox{in}\ K_0(\hbox{Var}(k)) \ ,\]
   but
   \[  ([X] -[Y]) [\A^1]=0\ \ \ \hbox{in}\ K_0(\hbox{Var}(k)) \ .\]
    \end{theorem}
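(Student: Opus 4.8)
The plan is to make the two threefolds explicit and then to establish the two displayed relations separately. The construction proceeds from the exceptional group $G_2$: there is a six-dimensional homogeneous ``roof'' $\mathcal{Z}$ (a flag variety) carrying two projective bundle structures over the two five-dimensional $G_2$-homogeneous varieties $\mathbb{G}_1$ and $\mathbb{G}_2$, and a general section of the relevant line bundle on $\mathcal{Z}$ cuts out, via the two projections, a pair of Calabi--Yau threefolds $X$ in $\mathbb{G}_1$ and $Y$ in $\mathbb{G}_2$. The first task is to verify that, for a generic choice of section, $X$ and $Y$ are smooth with trivial canonical bundle and $h^{1,0}=h^{2,0}=0$, i.e.\ genuine Calabi--Yau threefolds; this is a Bertini/transversality computation on the homogeneous spaces, using the known Fano indices and the Borel--Weil description of the bundles involved.

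For the L-equivalence, write $\LL=[\A^1]$. The engine is a cut-and-paste computation in $K_0(\hbox{Var}(k))$ in which the class of one auxiliary variety — most naturally the zero locus $\mathcal{W}$ of the chosen section on the roof $\mathcal{Z}$ — is evaluated in two ways. Projecting $\mathcal{W}$ to $\mathbb{G}_1$ and stratifying by the behaviour of the section along the fibres, the projective bundle formula produces a factor $(1+\LL+\cdots+\LL^{m})$ and isolates $[X]$; projecting to $\mathbb{G}_2$ isolates $[Y]$ by the symmetric computation. Subtracting, the discriminant strata that separate the two sides are fibred by affine spaces over a common base, hence contribute equally once an $\A^1$-factor is present, and collecting terms should leave precisely $([X]-[Y])\,\LL=0$. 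The substantial work here is the explicit stratification of the degeneration loci and the verification that the non-generic strata match on the two sides.

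For the inequality $[X]\not=[Y]$, it suffices to show that $X$ and $Y$ are \emph{not} stably birational: by the theorem of Larsen--Lunts identifying $K_0(\hbox{Var}(k))/(\LL)$ with the monoid ring $\Z[SB]$ of stable birational classes, $[X]=[Y]$ in $K_0(\hbox{Var}(k))$ would force $[X]=[Y]$ in $\Z[SB]$ and hence stable birationality. I would establish non-stable-birationality by exhibiting a birational invariant, stable under crossing with projective space, on which the two threefolds differ. Since $X$ and $Y$ are derived equivalent they share all Hodge numbers, so the invariant must be finer than Hodge theory; the natural source is the distinct birational geometry of the two $G_2$-constructions (for instance an incompatibility of the respective small contractions, or the presence of rational curves or divisor classes forced on one side but not the other).

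The step I expect to be the main obstacle is this last one. The two threefolds are built to be as indistinguishable as possible — derived equivalent, L-equivalent, with identical Hodge numbers — so separating their stable birational classes is exactly where the construction must do real work, and no crude numerical invariant will suffice. By contrast, once the roof is in place the L-equivalence is a mechanical, if lengthy, computation in the Grothendieck ring, and the Calabi--Yau verification is routine Bertini theory.
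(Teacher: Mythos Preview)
The paper does not prove this theorem at all: it is stated as a result of Ito--Miura--Okawa--Ueda and simply cited, with no argument given beyond the attribution. So there is nothing in the paper to compare your proposal against; you have in effect sketched (your reconstruction of) the proof of the cited paper \cite{IMOU}, which is more than what is required here.

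That said, your outline of the \cite{IMOU} argument is broadly on target. The $G_2$ roof with its two $\PP^1$-bundle projections is exactly the set-up, and the L-equivalence is indeed obtained by computing the class of the hypersurface in the roof in two ways via the projective bundle formula. For the inequality $[X]\not=[Y]$ you correctly reduce, via Larsen--Lunts, to showing that $X$ and $Y$ are not stably birational; but your final paragraph is too vague to be a plan. The actual mechanism --- which the present paper alludes to in the remark following Theorem~\ref{kuz} --- is the MRC fibration: since $X$ and $Y$ are non-uniruled (being Calabi--Yau), stable birationality would force genuine birationality, and one then rules this out (both threefolds have Picard number $1$, so a birational map would be an isomorphism, which is excluded by the geometry of the two embeddings). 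Your instinct that this is the hard step is correct, but ``incompatibility of small contractions'' or ``presence of rational curves forced on one side'' is not the route taken; the key input is that for non-uniruled varieties stable birationality collapses to birationality.
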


   \begin{theorem}[Kuznetsov \cite{Kuz}]\label{kuz} Let $k$ be any field. The threefolds $X, Y$ over $k$ constructed as in \cite{IMOU} are derived equivalent: there is an isomorphism between the bounded derived categories of coherent sheaves
     \[ D^b(X) \cong D^b(Y)\ .\]
     
     In particular, if $k=\C$ then there is an isomorphism of polarized Hodge structures
     \[ H^3(X,\Z)\ \cong\ H^3(Y,\Z)\ .\]
     \end{theorem}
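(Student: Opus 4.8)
The statement has two parts, and the plan is to treat them separately: first the derived equivalence $D^b(X)\cong D^b(Y)$, and then to deduce the isometry of Hodge structures from it. For the derived equivalence I would follow the strategy of Homological Projective Duality (HPD). The first task is to read off from \cite{IMOU} the precise geometric presentation of the pair: both $X$ and $Y$ should be realized as smooth, dimensionally transverse linear sections --- twisted, if necessary, by the homogeneous bundles intrinsic to the construction --- of a homogeneous Fano variety $Z$ and of the variety $Z^\vee$ playing the role of its projective dual. The Calabi--Yau normalization forces $X$ and $Y$ to be cut out by mutually orthogonal linear subspaces $L$ and $L^\perp$ of complementary dimension, that is, to lie in the \emph{critical} (``balanced'') range of HPD; if instead \cite{IMOU} presents $X$ and $Y$ as free quotients of such sections by a finite group, I would pass throughout to the equivariant version of the theory.

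The heart of the argument is then the HPD formalism. I would equip $D^b(Z)$ with a Lefschetz decomposition with respect to $\OO_Z(1)$, assembled from the natural exceptional objects on $Z$, and identify the homological projective dual category with $D^b(Z^\vee)$ endowed with a dual Lefschetz decomposition. The fundamental theorem of HPD then yields, for the dual subspaces $L$ and $L^\perp$, semiorthogonal decompositions of $D^b(X)$ and $D^b(Y)$ sharing a common ``primitive'' component, the remaining pieces being governed by the two Lefschetz decompositions. In the critical range the numerical bookkeeping shows that no Lefschetz pieces survive on either side, so the primitive component exhausts both categories and one obtains $D^b(X)\cong D^b(Y)$. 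I expect the main obstacle to be not the abstract formalism but this geometric input: constructing the correct Lefschetz decomposition, identifying $Z^\vee$ together with the duality kernel, and checking smoothness and expected dimension of the sections --- precisely the features special to the IMOU varieties --- together with the equivariant bookkeeping if a finite quotient is present.

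For the Hodge-theoretic assertion I would take $k=\C$ and invoke Orlov's representability theorem to realize the equivalence as a Fourier--Mukai transform with kernel $\mathcal E\in D^b(X\times Y)$. Its Mukai vector $v(\mathcal E)=\mathrm{ch}(\mathcal E)\sqrt{\mathrm{td}(X\times Y)}$ is a sum of algebraic, hence Hodge, classes and so acts on cohomology by an algebraic correspondence; since $v(\mathcal E)$ is of even total degree this action preserves the parity of cohomological degree and, being induced by an equivalence, is an isomorphism $H^{\mathrm{odd}}(X,\QQ)\xrightarrow{\ \sim\ }H^{\mathrm{odd}}(Y,\QQ)$. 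For Calabi--Yau threefolds $h^{1,0}=0$, so the odd cohomology reduces to $H^3$, and the component of $v(\mathcal E)$ in $H^6(X\times Y)$, a class of Hodge type $(3,3)$, induces an isomorphism of \emph{rational} Hodge structures $H^3(X,\QQ)\cong H^3(Y,\QQ)$ of bidegree $(0,0)$.

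The last, and genuinely delicate, step is to refine this to an isometry of integral polarized Hodge structures. Since $\sqrt{\mathrm{td}}$ contributes only classes of even degree in each factor, the $H^3(X)\otimes H^3(Y)$ K\"unneth component of $v(\mathcal E)$ coincides with that of $\mathrm{ch}(\mathcal E)$; the task is to show that it is integral and induces an isomorphism of the weight-three lattices. I would extract integrality from a Grothendieck--Riemann--Roch analysis of this K\"unneth component, and compatibility of the polarizations up to sign from the fact that the equivalence preserves the Euler pairing, which on the weight-three part is matched with the intersection form. Alternatively, the explicit kernel produced by the HPD construction can be used to write down the integral correspondence directly. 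This lattice-theoretic comparison is where most of the care is required.
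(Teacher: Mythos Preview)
The paper does not prove this theorem at all: it simply records that the derived equivalence is \cite[Theorem~5]{Kuz} and that the Hodge-theoretic consequence follows from \cite[Proposition~2.1 and Remark~2.3]{OR}. Both parts are black-box citations. Your proposal, by contrast, attempts to sketch actual proofs of the two statements, so you are doing strictly more than the paper does; the relevant comparison is therefore with the cited sources rather than with the paper itself.

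For the Hodge-theoretic part your outline is essentially the standard argument and is, up to details, what \cite{OR} carries out: realize the equivalence by a Fourier--Mukai kernel, push its Mukai vector through cohomology, observe that for a Calabi--Yau threefold the odd cohomology is $H^3$, and then chase integrality and the polarization. So here you are on the same track as the reference the paper invokes.

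For the derived equivalence, however, your proposal is speculative in a way that matters. You explicitly do not know the IMOU construction (``should be realized as\ldots'', ``if instead\ldots'') and guess that HPD for a pair $Z,Z^\vee$ with mutually orthogonal linear sections in the critical range is the mechanism. That is not the geometry here, and it is not how \cite{Kuz} proceeds. The IMOU threefolds sit in the diagram reproduced later in the present paper: $X\subset Q$ a $5$-dimensional quadric, $Y\subset G=\gr(2,V)\cap\PP(W)$, linked by a flag variety $F$ carrying $\PP^1$-fibrations $\pi\colon F\to Q$ and $\rho\colon F\to G$; the key fact is that the blow-up of $F$ along (the preimage of) $X$ coincides with the blow-up of $F$ along (the preimage of) $Y$. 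Kuznetsov's argument uses Orlov's blow-up formula on both sides to produce two semiorthogonal decompositions of $D^b(M)$, and then mutations to match the ``interesting'' pieces, yielding $D^b(X)\cong D^b(Y)$. There is no homologically projectively dual pair $Z,Z^\vee$ in the picture, and no evident Lefschetz decomposition on a single ambient variety whose HPD would produce both $X$ and $Y$ as mutually orthogonal sections. So your first paragraph, as written, is not a proof sketch but a hope; to turn it into one you would have to either discover an HPD interpretation of this $G_2$-flag geometry (not in the literature, as far as I know) or switch to the blow-up/mutation argument that \cite{Kuz} actually runs.
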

     
     \begin{proof} The derived equivalence is \cite[Theorem 5]{Kuz}. The isomorphism of Hodge structures is a corollary of the derived equivalence, in view of
     \cite[Proposition 2.1 and Remark 2.3]{OR}.
      \end{proof}
    
  \begin{remark} The construction of the threefolds $X, Y$ in \cite{IMOU} works over any field $k$. However, the proof that $[X]\not=[Y]$ uses the MRC fibration and is (a priori) restricted to characteristic $0$. The argument of \cite{Kuz}, on the other hand, has no characteristic $0$ assumption.
  \end{remark}

\section{Main result}     

\begin{theorem}\label{main} Let $k$ be any field, and let $X, Y$ be the two Calabi--Yau threefolds over $k$ constructed as in \cite{IMOU}. Then
   \[ h(X)\cong h(Y)\ \ \ \hbox{in}\ \MM_{\rm rat}\ .\]
  \end{theorem}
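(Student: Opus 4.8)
The plan is to split off the ``interesting'' summand of the motive and to manufacture an isomorphism of that summand directly from the derived equivalence of theorem \ref{kuz}; notably, the L-equivalence will not be needed. \emph{Reduction to $H^3$.} A threefold carries a Chow--K\"unneth decomposition $h(X)=\bigoplus_{i=0}^{6}h^i(X)$ (Murre). For a Calabi--Yau threefold one has $H^1=H^5=0$, while $H^2$ and $H^4$ are spanned by algebraic classes (as $h^{2,0}=h^{3,1}=0$); hence $h^1(X)=h^5(X)=0$, $h^2(X)\cong\LL^{\oplus\rho}$, $h^4(X)\cong(\LL^{\otimes 2})^{\oplus\rho}$ with $\rho=\dim H^{1,1}(X)$, while $h^0(X)=\mathds 1$ and $h^6(X)=\LL^{\otimes 3}$. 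Derived equivalence preserves Hochschild homology, hence $\dim HH_0=2+2\rho$, so $\rho(X)=\rho(Y)$ (alternatively one simply reads off the Hodge numbers of the two threefolds from \cite{IMOU}). Consequently $h(X)\cong h(Y)$ will follow once $h^3(X)\cong h^3(Y)$ is proved.

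\emph{The correspondence.} Let $\mathcal E\in D^b(X\times Y)$ be a Fourier--Mukai kernel of the equivalence and $\mathcal E'$ one for a quasi-inverse; set $\Phi=\mathrm{ch}(\mathcal E)\cdot\sqrt{\mathrm{td}(T_{X\times Y})}\in A^\ast(X\times Y)_{\QQ}$ and likewise $\Psi$ on $Y\times X$. The (Grothendieck--Riemann--Roch) compatibility of the Mukai vector with convolution of kernels gives $\Psi\circ\Phi=\Delta_X$ and $\Phi\circ\Psi=\Delta_Y$ as correspondences of mixed degree. Because $H^3$ is the only odd ($\ell$-adic, or singular) cohomology of a Calabi--Yau threefold and the Fourier--Mukai transform is an isomorphism on total cohomology preserving the parity of the cohomological degree, a degree count shows that the isomorphism $H^3(X)\xrightarrow{\ \sim\ }H^3(Y)$ is induced already by the pure codimension-$3$ component $\Phi^{(3)}\in A^3(X\times Y)_{\QQ}$, which is an honest morphism of motives $h(X)\to h(Y)$. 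Put $\phi:=\pi_3^Y\circ\Phi^{(3)}\circ\pi_3^X$ and $\psi:=\pi_3^X\circ\Psi^{(3)}\circ\pi_3^Y$; by the above these induce mutually inverse isomorphisms on $H^3$.

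\emph{From cohomology to Chow groups.} It remains to prove $\psi\circ\phi=\pi_3^X$ and $\phi\circ\psi=\pi_3^Y$ in $\MM_{\rm rat}$. The key elementary input is that for a Calabi--Yau threefold $Z$ the groups $\mathrm{Hom}_{\MM}(\LL^{\otimes j},h^3(Z))$ and $\mathrm{Hom}_{\MM}(h^3(Z),\LL^{\otimes j})$ are, up to re-indexing, the $\pi_3^Z$-isotypic parts of $A^{j}(Z)$ and of $A^{3-j}(Z)$, and at least one of them vanishes for each $j$ (since a Calabi--Yau threefold has $A^0(Z)$ of $\pi_0$-type and $A^1(Z)=\mathrm{NS}(Z)_{\QQ}$ of $\pi_2$-type); hence every composite $h^3(X)\to(\text{Tate motive})\to h^3(X)$ vanishes. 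Expanding $\Psi\circ\Phi=\Delta_X$ after inserting $\Delta_Y=\sum_l\pi_l^Y$ in the middle, the contributions passing through the even summands of $h(Y)$ disappear, and one is left with $\psi\circ\phi=\pi_3^X-\eta$, where $\eta$ is a homologically trivial self-correspondence of $h^3(X)$ assembled from the off-diagonal pieces $\pi_3^Y\circ\Phi^{(b)}\circ\pi_3^X$, $b\ne 3$. The remaining task is to see $\eta=0$; for this I would exploit the explicit geometry, namely that Kuznetsov's equivalence is realized by a concrete kernel built from the homogeneous varieties underlying the Ito--Miura--Okawa--Ueda construction, so that the correspondences entering $\eta$ can be computed on those ambient varieties and shown to be of Tate type, hence annihilated by the sandwiching with $\pi_3$.

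\emph{The main obstacle.} This last point is the crux. Promoting the cohomological isomorphism $H^3(X)\cong H^3(Y)$ to an isomorphism of Chow motives is an instance of Orlov's conjecture and is open in general (the stray term $\eta$ need not vanish for purely formal reasons: $\mathrm{Hom}_{\MM}(h^3(X),h^3(Y)(m))$ can be nonzero for $m\ne 0$); it becomes tractable here precisely because $X$ and $Y$ are Calabi--Yau threefolds, so that $H^3$ is the unique odd cohomology group and all stray cross-terms are of Tate type, \emph{and} because the equivalence together with the two threefolds are given by explicit homogeneous geometry. Identifying and killing the error correspondence $\eta$ is where the work lies; once this is done, $h(X)\cong h(Y)$ follows, and with it corollary \ref{cor} on equality of zeta functions over finite fields.
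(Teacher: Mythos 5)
Your opening reduction is fine and coincides with the paper's first step: splitting off the Tate summands of a Calabi--Yau threefold with $\rho(X)=\rho(Y)$ and reducing everything to $h^3(X)\cong h^3(Y)$. The problem is the core of your argument. After writing $\psi\circ\phi=\pi^3_X-\eta$, with $\eta$ assembled from the mixed-degree components $\pi^3_Y\circ\Phi^{(b)}\circ\pi^3_X$, $b\neq 3$, of the Mukai-vector correspondence, you correctly observe that these terms do not vanish for formal reasons (morphisms $h^3(X)\to h^3(Y)(m)$ with $m\neq 0$ can be nonzero --- they involve, e.g., homologically trivial cycles of codimension $2$), and you then defer their vanishing to an unspecified computation with Kuznetsov's kernel (``identifying and killing the error correspondence $\eta$ is where the work lies''). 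That deferred step is not a detail: promoting a derived equivalence to an isomorphism of Chow motives is precisely Orlov's conjecture, i.e.\ it is the entire content of the theorem. Your argument up to that point only reproduces the cohomological isomorphism $H^3(X)\cong H^3(Y)$, which is already in theorem \ref{kuz}; so as written the proposal is a strategy with the decisive step missing, not a proof. (A secondary caveat: even if you had identities of correspondences only on Chow groups over $k$, you would still need them over all field extensions, or a direct identity of correspondences, to conclude an isomorphism in $\MM_{\rm rat}$.)

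The paper closes exactly this gap by abandoning the Fourier--Mukai kernel and using the explicit geometry of the construction instead. In \cite{IMOU} and \cite{Kuz}, $X\subset Q$ (a five-dimensional quadric) and $Y\subset G$ (a linear section of a Grassmannian) are connected by $\PP^1$-fibrations $Q\leftarrow F\rightarrow G$ and by a variety $M$ that is simultaneously the blow-up of $Q$ along $X$ and of $G$ along $Y$, with exceptional divisors $D\to X$ and $E\to Y$. Since $Q$ has trivial Chow groups, so do $F$ and $G$ ($\PP^1$-fibrations), hence the blow-up formula identifies $A^{i+1}_{hom}(M)$ with both $A^i_{hom}(X)$ and $A^i_{hom}(Y)$, and by \cite[Theorem 5.3]{Vial} these identifications and their inverses are induced by explicit correspondences. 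The composite $\Gamma=\Gamma_q\circ{}^t\Gamma_j\circ\Gamma_i\circ{}^t\Gamma_p\in A^3(X\times Y)$ then induces isomorphisms $A^i_{hom}(X_K)\cong A^i_{hom}(Y_K)$ for every field extension $K\supset k$, and the standard argument (as in \cite[Lemma 1.1]{Huy}) upgrades this to $h^3(X)\cong h^3(Y)$ in $\MM_{\rm rat}$. If you want to salvage your approach, this blow-up diagram is also the natural source of the ``explicit homogeneous geometry'' you invoke; but the direct route through it makes the derived equivalence unnecessary.
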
 
     
  \begin{proof} 
  First, to simplify matters, let us slightly cut down the motives of $X$ and $Y$. It is known \cite{IMOU} that $X$ and $Y$ have Picard number $1$. A routine argument  gives a decomposition of the Chow motives
  \[ \begin{split}   h(X)&= \mathds{1} \oplus \mathds{1}(1)\oplus h^3(X) \oplus \mathds{1}(2) \oplus \mathds{1}(3)\ ,\\
                   h(Y)&= \mathds{1} \oplus \mathds{1}(1)\oplus h^3(Y) \oplus \mathds{1}(2) \oplus \mathds{1}(3)\ \ \ \ \ \ \hbox{in}\ \MM_{\rm rat}\ ,\\
                  \end{split} \] 
  where $\mathds{1}$ is the motive of the point $\hbox{Spec}(k)$. (The gist of this ``routine argument'' is as follows: let $H\in A^1(X)$ be a hyperplane section. Then 
    \[ \pi^{2i}_X:= c_i H^{3-i}\times H\ \ \ \in\ A^3(X\times X)\ , \ \ \ 0\le i\le 3\ ,\] 
    defines an orthogonal set of projectors lifting the K\"unneth components, for appropriate $c_i\in\QQ$. One can then define $\pi^3_X=\Delta_X-\sum_i \pi^{2i}_X\in A^3(X\times X)$, and $h^j(X)=(X,\pi^i_X,0)\in \MM_{\rm rat}$, and ditto for $Y$.)
  
  To prove the theorem, it will thus suffice to prove an isomorphism of motives
    \begin{equation}\label{iso3}   h^3(X)\cong h^3(Y)\ \ \ \hbox{in}\ \MM_{\rm rat}\ .\end{equation}
    We observe that the above decomposition (plus the fact that $H^\ast(h^3(X))=H^3(X)$ is odd--dimensional) implies equality
     \[ A^\ast(h^3(X)) = A^\ast_{hom}(X)\ ,\]
     and similarly for $Y$.

  The rest of the proof will consist in finding a correspondence $\Gamma\in A^3(X\times Y)$ inducing isomorphisms
    \begin{equation}\label{isochow} \Gamma_\ast\colon \ \ A^j_{hom}(X_K)\ \xrightarrow{\cong}\ A^j_{hom}(Y_K)\ \ \ \forall j\ ,\end{equation}
    for all field extensions $K\supset k$.
   By the above observation, this means that $\Gamma$ induces isomorphisms
    \[  A^j(h^3(X)_K)\ \xrightarrow{\cong}\ A^j(h^3(Y)_K)\ \ \ \forall j\ ,\]  
    which (as is well-known, cf. for instance \cite[Lemma 1.1]{Huy}) ensures that $\Gamma$ induces the required isomorphism of Chow motives (\ref{iso3}).

To find the correspondence $\Gamma$, we need look no further than the construction of the threefolds $X, Y$. As explained in \cite{IMOU} and \cite{Kuz}, the threefolds $X, Y$ are related via a diagram
  \[ \begin{array}[c]{ccccccccc}   && D & \xrightarrow{i} & M & \xleftarrow{j} & E && \\
                &&&&&&&&\\
                   &{\scriptstyle p} \swarrow \ \ && {\scriptstyle \pi_M} \swarrow \ \ \ &\downarrow & \ \ \ \searrow {\scriptstyle \rho_M} & & \ \ \searrow {\scriptstyle q} & \\
                   &&&&&&&&\\
                   X & \hookrightarrow & Q & \xleftarrow{\pi} & F & \xrightarrow{\rho}& G & \hookleftarrow & Y\\
                   \end{array}\]
                   
         Here $Q$ is a smooth $5$-dimensional quadric, and $G$ is a smooth intersection $G=\gr(2,V)\cap \PP(W)$ of a Grassmannian and a linear subspace. The morphisms $\pi$ and $\rho$ are $\PP^1$-fibrations. The morphisms $\pi_M$ and $\rho_M$ are the blow-ups with center the threefold $X$, resp. the threefold $Y$. The varieties $D, E$ are the exceptional divisors of the blow-ups.      
         
  \begin{lemma}\label{triv} Let $Q$ and $G$ be as above. We have
    \[ A^i_{hom}(Q) = A^i_{hom}(G)=0\ \ \ \forall i\ .\]
   \end{lemma}
   
   \begin{proof} It is well-known that a $5$-dimensional quadric $Q$ has trivial Chow groups. (Indeed, \cite[Corollary 2.3]{ELV} gives that $A^i_{hom}(Q)=0$ for $i\ge 3$. The Bloch--Srinivas argument \cite{BS}, combined with the fact that $H^3(Q)=0$, then implies that $A^2_{hom}(Q)=0$.)   
   
   As $\pi\colon F\to Q$ is a $\PP^1$-fibration, it follows that the variety $F$ has trivial Chow groups. But $\rho\colon F\to G$ is a $\PP^1$-fibration, and so $G$ also has trivial Chow groups.
   \end{proof} 

The blow-up formula, combined with lemma \ref{triv}, gives isomorphisms
  \[  \begin{split}  i_\ast p^\ast\colon\ \ \ A^i_{hom}(X)\ &\xrightarrow{\cong}\ A^{i+1}_{hom}(M)\ ,\\
                            j_\ast q^\ast\colon\ \ \  A^i_{hom}(Y)\ &\xrightarrow{\cong}\ A^{i+1}_{hom}(M)\ .\\     
                       \end{split}\]     
             What's more, the inverse isomorphisms are induced by a correspondence: the compositions
         \[  \begin{split}        &   A^i_{hom}(X)\ \xrightarrow{i_\ast p^\ast}\ A^{i+1}_{hom}(M)  \ \xrightarrow{ -p_\ast i^\ast}\ A^i_{hom}(X)\ ,\\
                                        &    A^i_{hom}(Y)\ \xrightarrow{j_\ast q^\ast}\ A^{i+1}_{hom}(M)  \ \xrightarrow{ -q_\ast j^\ast}\ A^i_{hom}(Y)\ ,\\
                                        &  A^{i+1}_{hom}(M)  \ \xrightarrow{ -p_\ast i^\ast}\ A^i_{hom}(X) \ \xrightarrow{  i_\ast p^\ast}\ A^{i+1}_{hom}(M) \ ,\\
                       &  A^{i+1}_{hom}(M)  \ \xrightarrow{ -q_\ast j^\ast}\ A^i_{hom}(Y) \ \xrightarrow{  j_\ast q^\ast}\ A^{i+1}_{hom}(M) \ ,\\
                                 \end{split}\]
                                 are all equal to the identity \cite[Theorem 5.3]{Vial}.       
      
    This suggests how to find a correspondence $\Gamma$ doing the job. Let us define
   \[ \Gamma:=  \Gamma_q \circ {}^t \Gamma_j \circ \Gamma_i \circ {}^t \Gamma_p\ \ \ \hbox{in}\ A^3(X\times Y)\  .\]
     
   Then we have (by the above) that
   \[ \begin{split}  &\Gamma^\ast \Gamma_\ast=\ide\colon\ \ \ A^i_{hom}(X)\ \to\    A^i_{hom}(X)\ ,\\   
                          &\Gamma_\ast \Gamma^\ast=\ide\colon\ \ \ A^i_{hom}(Y)\ \to\    A^i_{hom}(Y)\   \\
                       \end{split}\]
                       for all $i$, and so there are isomorphisms
                 \[ \Gamma_\ast\colon\ \ \ A^i_{hom}(X)\ \to\    A^i_{hom}(Y)\ \ \ \forall i\ .\]
          Given a field extension $K\supset k$, the threefolds $X_K, Y_K$ are related via a blow-up diagram as above, and so the same reasoning as above shows that there are isomorphisms
            \[ \Gamma_\ast\colon\ \ \ A^i_{hom}(X_K)\ \to\    A^i_{hom}(Y_K)\ \ \ \forall i\ .\]
        We have now established that $\Gamma$ verifies (\ref{isochow}), which clinches the proof.
     
            \end{proof}   
        

\section{A corollary}

\begin{corollary}\label{cor} Let $k$ be a finite field, and let $X,Y$ be the Calabi--Yau threefolds over $k$ constructed as in \cite{IMOU}. Then $X$ and $Y$ have the same zeta function.
\end{corollary}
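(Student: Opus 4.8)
The plan is to deduce the corollary formally from Theorem \ref{main} by passing to $\ell$-adic realizations. Fix a prime $\ell$ invertible in $k$ and set $q=\#k$. The isomorphism $h(X)\cong h(Y)$ in $\MM_{\rm rat}$ furnished by Theorem \ref{main} is implemented by the correspondence $\Gamma\in A^3(X\times Y)$ constructed there, together with a quasi-inverse (its transpose), and this cycle is defined over $k$ itself, since every morphism in the Ito--Miura--Okawa--Ueda blow-up diagram is. Applying the $\ell$-adic realization functor, one obtains for each $i$ an isomorphism
\[ H^i_{\text{\'et}}(X_{\bar k},\QQ_\ell)\ \xrightarrow{\ \sim\ }\ H^i_{\text{\'et}}(Y_{\bar k},\QQ_\ell)\ ; \]
because it is induced by an algebraic cycle defined over $k$, hence with Frobenius-invariant class, this isomorphism commutes with the geometric Frobenius $F$. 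Consequently the characteristic polynomials $P_i(X,t):=\det\bigl(1-Ft\mid H^i_{\text{\'et}}(X_{\bar k},\QQ_\ell)\bigr)$ coincide with $P_i(Y,t)$ for every $i$.

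From there I would invoke the Grothendieck--Lefschetz trace formula: for all $n\ge 1$,
\[ \#X(\mathbb{F}_{q^n})=\sum_i (-1)^i\operatorname{tr}\bigl(F^n\mid H^i_{\text{\'et}}(X_{\bar k},\QQ_\ell)\bigr)\ , \]
and likewise for $Y$. Since the $F$-modules $H^i_{\text{\'et}}(X_{\bar k},\QQ_\ell)$ and $H^i_{\text{\'et}}(Y_{\bar k},\QQ_\ell)$ are isomorphic for all $i$, the point counts over every finite extension of $k$ agree, so $Z(X/k,t)=\exp\!\bigl(\sum_{n\ge 1}\#X(\mathbb{F}_{q^n})\,t^n/n\bigr)=Z(Y/k,t)$; equivalently $Z(X/k,t)=\prod_i P_i(X,t)^{(-1)^{i+1}}=\prod_i P_i(Y,t)^{(-1)^{i+1}}=Z(Y/k,t)$. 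In fact, using the decomposition of $h(X)$ and $h(Y)$ recorded in the proof of Theorem \ref{main} (and the Picard number $1$ hypothesis), the only factor in question is the one coming from $h^3$: the summands $\mathds{1},\mathds{1}(1),\mathds{1}(2),\mathds{1}(3)$ contribute the same factors $1-t$, $1-qt$, $1-q^2t$, $1-q^3t$ for both threefolds, while the odd cohomology outside degree $3$ vanishes, so the equality of zeta functions reduces to $P_3(X,t)=P_3(Y,t)$, which is exactly what the isomorphism $h^3(X)\cong h^3(Y)$ provides.

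This step is routine rather than a real obstacle; the one subtlety worth flagging is the compatibility of the motivic isomorphism with the Galois action, i.e. the fact that $\Gamma$ genuinely lives in $A^3(X\times Y)$ over $k$ and not merely over $\bar k$ --- but this is built into the construction in the proof of Theorem \ref{main}. Alternatively, one can phrase the whole argument through the $\ell$-adic realization functor on $\MM_{\rm rat}$ over $k$, which is a tensor functor landing in $\ell$-adic Galois representations and carries an isomorphism of motives to an isomorphism of Galois representations; the equality of zeta functions then drops out of the Weil conjectures.
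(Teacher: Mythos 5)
Your argument is correct and is exactly the paper's: the paper's one-line proof likewise notes that the zeta function is determined, via the Lefschetz fixed point formula, by the Frobenius action on $\ell$-adic cohomology and hence only by the Chow motive over $k$. You simply spell out the realization step (and the reduction to $h^3$) in more detail than the paper does.
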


\begin{proof} The zeta function can be expressed (via the Lefschetz fixed point theorem) in terms of the action of Frobenius on $\ell$-adic \'etale cohomology, hence  depends only on the motive.
\end{proof}

\begin{remark} Corollary \ref{cor} can also be deduced from \cite{Hon}, where it is proven that derived equivalent varieties of dimension $3$ have the same zeta function.
The above proof (avoiding recourse to \cite{Kuz} and \cite{Hon}) is more straightforward.
\end{remark}

\vskip1cm
\begin{nonumberingt}
This note was written at the Schiltigheim Math Research Institute. Thanks to the dedicated staff, who provide excellent working conditions.
\end{nonumberingt}

\vskip1cm


\begin{thebibliography}{dlPG99}



\bibitem{BS} S. Bloch and V. Srinivas, Remarks on correspondences and algebraic cycles, American Journal of Mathematics Vol. 105, No 5 (1983), 1235---1253,

\bibitem{Bor} L. Borisov, Class of the affine line is a zero divisor in the Grothendieck ring, Journal of Alg. Geom. 27 no. 2 (2018), 203---209,

\bibitem{ELV} H. Esnault, M. Levine and E. Viehweg, Chow groups of projective varieties of very small degree, Duke Math. Journal 87 No. 1 (1997), 29---58,

\bibitem{Hon} K. Honigs, Derived equivalence, Albanese varieties, and the zeta functions of $3$-dimensional varieties (with an appendix by J. Achter, S. Casalaina--Martin, K. Honigs and Ch. Vial), Proc. Amer. Math. Soc.,

\bibitem{Huy} D. Huybrechts, Motives of derived equivalent $K3$ surfaces, Abhandlungen Math. Sem. Univ. Hamburg 88 no. 1 (2018), 201---207,

\bibitem{IMOU} A Ito, M. Miura, S. Okawa and K. Ueda, The class of the affine line is a zero divisor in the Grothendieck ring: via $G_2$-Grassmannians, arXiv:1606.04210,

\bibitem{Kuz} A. Kuznetsov, Derived equivalence of Ito--Miura--Okawa--Ueda Calabi--Yau 3-folds, Journal of the Math. Soc. Japan 70 no. 3 (2018), 1007---1013,

\bibitem{KS} A. Kuznetsov and E. Shinder, Grothendieck ring of varieties, D- and L-equivalence, and families of quadrics, Selecta Math. 24 no. 4 (2018),  3475---3500,

\bibitem{MNP} J. Murre, J. Nagel and C. Peters, Lectures on the theory of pure motives, Amer. Math. Soc. University Lecture Series 61, Providence 2013,

\bibitem{Or} D. Orlov, Derived categories of coherent sheaves and motives, Uspekhi Mat. Nauk, 60 no. 6 (2005), 231---232, translation in Russian Math. Surveys 60 no. 6 (2005), 1242---1244,

\bibitem{OR} J. Ottem and J. Rennemo, A counterexample to the birational Torelli problem for Calabi--Yau threefolds, Journal of the London Math. Soc. 97 (2018), 427---440,

\bibitem{Sc} T. Scholl, Classical motives, in: Motives (U. Jannsen et alii, eds.), Proceedings of Symposia in Pure Mathematics Vol. 55 (1994), Part 1,

\bibitem{Vial} Ch. Vial, Algebraic cycles and fibrations, Documenta Math. 18 (2013), 1521---1553.
\end{thebibliography}
\end{document}